\definecolor{olive}{rgb}{0.3, 0.4, .1}
\definecolor{fore}{RGB}{249,242,215}
\definecolor{back}{RGB}{51,51,51}
\definecolor{title}{RGB}{255,0,90}
\definecolor{dgreen}{rgb}{0.,0.4,0.}
\definecolor{gold}{rgb}{1.,0.84,0.}
\definecolor{JungleGreen}{cmyk}{0.99,0,0.52,0}
\definecolor{bgreen}{cmyk}{0.85,0,0.33,0}
\definecolor{RawSienna}{cmyk}{0,0.72,1,0.45}
\definecolor{Magenta}{cmyk}{0,1,0,0}
\newtheorem{theorem}{Theorem}
\newtheorem{lemma}[theorem]{Lemma}
\newtheorem{claim}[theorem]{Claim}
\newtheorem{corollary}[theorem]{Corollary}
\theoremstyle{remark}
\newcommand{\E}{E^*}
\newcommand{\V}{V^*}
\newcommand{\T}{T^*}
\newcommand{\w}{p}
\renewcommand{\H}{g}
\newcommand{\X}{X^*}
\newcommand{\f}{i}
\newcommand{\R}{\mathbb{R}}
\newcommand{\F}{\mathbb{F}}
\title{An elementary exposition to \\
topological overlap in the plane}
\author{Amir Yehudayoff\footnote{Department
of mathematics, Technion-IIT, Israel.
\texttt{amir.yehudayoff@gmail.com}.}}
\date{}
\begin{document}

\maketitle

\begin{abstract}
The aim of this text is to provide an elementary and self-contained
exposition of Gromov's argument
on topological overlap
(the presentation is based on Gromov's work,
as well as two follow-up papers of 
Matou\v{s}ek and Wagner,
and of Dotterrer, Kaufman and Wagner).
We also discuss a simple generalization
in which the vertices are weighted according to some probability distribution.
This allows to use von Neumann's minimax theorem
to deduce a dual statement.
\end{abstract}

\section{Introduction}

Consider $n$ generic points in $\R^2$,
and the ${n \choose 3}$ triangles they define.
Boros and Furedi~\cite{zbMATH03958121} proved that there
always exists a point $r \in \R^2$ that belongs to a fraction of at least
$2/9$ of the triangles (and that $2/9$ is tight).

Gromov~\cite{zbMATH05800304} introduced
a topological framework, that generalizes
Boros and Furedi's affine framework.
Let $X = (V,E,T)$ be the $2$-skeleton of the $n$-simplex. 
That is, the vertex set $V$ is a set of size $n$,
the edge set $E$ is the set of all subsets of $V$ of size two,
and the triangle set $T$ is the set of all subsets of $V$ of size three.
Let $f:X \to \R^2$ be a continuous map. Namely,
the image of a vertex $f(v)$ is a point in $\R^2$,
the image of an edge
$f(e) = f(\{v_1,v_2\})$ is a continuous path between $f(v_1)$ and $f(v_2)$
that is topologically an interval, and the image of a triangle $f(t) = f(\{v_1,v_2,v_3\})$ is topologically a triangle.

\begin{center}
\begin{tikzpicture}

\draw[fill] (0.6,-0.2) circle (0.05cm);
\draw[fill] (-0.5,0.7) circle (0.05cm);
\draw[fill] (0.4,0.6) circle (0.05cm);
\draw[fill] (-0.6,-0.2) circle (0.05cm);
\draw[thick, dash pattern=on 2pt off 1pt]  plot [smooth cycle] coordinates {(-0.5,0.7) (0.22,0.13) (0.6,-0.2) (0,-0.85) (-0.6,-0.2) (-0.35,0.1)};
\draw[thick, dash pattern=on 2pt off 1pt]  plot [smooth ] coordinates {(0.4,0.6) (0,0.9) (-0.5,0.7)};
\draw[thick, dash pattern=on 2pt off 1pt]  plot [smooth ] coordinates {(0.4,0.6) (0.5,0.2) (0.6,-0.2)};
\draw[thick, dash pattern=on 2pt off 1pt]  plot [smooth ] coordinates {(0.4,0.6) (0.1,0.2) (-0.6,-0.2)};

\draw[fill] (4.6,-0.2) circle (0.05cm);
\draw[fill] (3.5,0.7) circle (0.05cm);
\draw[fill] (4.4,0.6) circle (0.05cm);
\draw[fill] (3.4,-0.2) circle (0.05cm);
\draw[dash pattern=on 2pt off 1pt] (4.6,-0.2) -- (3.5,0.7);
\draw[dash pattern=on 2pt off 1pt] (4.6,-0.2) -- (4.4,0.6);
\draw[dash pattern=on 2pt off 1pt] (4.6,-0.2) -- (3.4,-0.2);
\draw[dash pattern=on 2pt off 1pt] (3.5,0.7) -- (4.4,0.6);
\draw[dash pattern=on 2pt off 1pt] (3.5,0.7) -- (3.4,-0.2);
\draw[dash pattern=on 2pt off 1pt] (3.4,-0.2) -- (4.4,0.6);

\end{tikzpicture}

\

\begin{minipage}{16cm}
{\em 
\noindent
Two copies of the $2$-skeleton of the $4$-complex
(four points, six edges, and four triangles).
The left one demonstrates the topological framework,
and the right one the affine framework.}
\end{minipage} 

\end{center}

Gromov proved the following
generalization of Boros and Furedi's result.

\begin{theorem}[\cite{zbMATH05800304}]
\label{thm:gromov}
For every continuous $f:X \to \R^2$,
there is a point $r \in \R^2$ 
so that the number of triangles $t \in T$ for which
$r \in f(t)$ is at least $c {n \choose 3}$ with $c = \frac{2}{9} - \frac{3}{n}$.
\end{theorem}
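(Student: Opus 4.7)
My plan is to bootstrap the one-dimensional analogue of the statement, which is a direct consequence of the intermediate value theorem, up to two dimensions via a sweeping-line argument.

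First, by a standard approximation, I would assume that $f$ is piecewise linear on a sufficiently fine subdivision of $X$, with vertex images in general position, so that a generic line $L \subset \R^2$ meets each $f(e)$ transversally in finitely many points and meets each $f(t)$ in finitely many arcs. Under these assumptions intersections are finite and parities are well defined, which is what the double-counting below needs.

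The one-dimensional base case is: for every continuous $g : X^1 \to \R$, and every $r \in \R$ that separates $k$ of the vertex images from the remaining $n-k$, at least $k(n-k)$ edges have $r \in g(e)$. This is immediate from the intermediate value theorem applied to each edge whose endpoints straddle $r$; balancing the split yields $\approx n^2/4$ such edges.

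To bootstrap to two dimensions, I would select a ``median'' line $L$ --- or, to obtain the sharp constant, a line through a centerpoint of the vertex images --- so that each open halfplane bounded by $L$ contains at least $\approx n/3$ vertex images. For each triangle $t$ whose vertices split as $(2,1)$ between the two halfplanes, the intersection $f(t) \cap L$ is a nonempty union of arcs, whose boundary in $L$ is determined by the edge-crossings of $\partial t$. Parametrizing $L$, the counting function $r \in L \mapsto |\{t : r \in f(t)\}|$ is piecewise constant, changing only at edge-crossings. A second application of the one-dimensional lemma --- applied to the $1$-chains on $L$ cut out by these triangles, viewed as a simplicial structure induced on $L$ --- picks out a point $r \in L$ contained in many triangles.

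The main difficulty I anticipate is extracting the sharp constant $\tfrac{2}{9}$. A direct double counting in the sweeping step gives only a weaker constant; to reach $\tfrac{2}{9}$ one must combine the centerpoint theorem, which lets us choose $L$ so that every line through the eventual $r\in L$ roughly trisects $V$, with a cofilling (isoperimetric) inequality on the complete $2$-complex, which controls how efficiently the $1$-chains $f(t) \cap L$ can tile $L$. The $-\tfrac{3}{n}$ error term should emerge from rounding losses in the iterated median/centerpoint selection.
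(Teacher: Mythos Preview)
Your proposal is essentially the affine argument for the Boros--F\"uredi theorem, and the step where it breaks in the topological setting is exactly the one you flag as ``the main difficulty''. After restricting to a line $L$, the traces $f(t)\cap L$ are unions of arcs and the traces $f(e)\cap L$ are finite point sets; these do \emph{not} assemble into a continuous map from the $1$-skeleton of an $n$-simplex to $\R$, so there is no ``second application of the one-dimensional lemma'' to invoke. What you actually have on $L$ is a system of $\F_2$-chains indexed by $T$ and $E$ with a boundary relation --- and extracting a heavily covered point from that data is not a $1$-dimensional IVT statement but precisely a cofilling problem for the coboundary map of the complete complex. You name this ingredient but give no mechanism for plugging it into a single-line sweep; in particular, the centerpoint property of $L$ is a statement about vertex \emph{images}, which in the topological setting no longer constrains where the arcs $f(t)\cap L$ sit, so the two ingredients do not combine in the way you suggest. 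The $-3/n$ term also does not come from rounding in a centerpoint selection.

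The paper's proof is organized quite differently and is worth contrasting. Rather than a single line, it fixes a fine generic triangulation $X^*=(V^*,E^*,T^*)$ of the ball and records, for each dual cell, the $\F_2$-intersection with $f$ (vertices of $X^*$ see triangles of $X$, edges see edges, triangles see vertices). The key object is a \emph{folding map} $g:E^*\to\F_2^V$ satisfying $g(e_1^*)+g(e_2^*)+g(e_3^*)=i(t^*)$ on every dual triangle and vanishing near the boundary of the ball. Summing over all $t^*$ shows immediately that no folding can exist (the left side telescopes to $0$, the right side is the all-ones vector). The substance of the argument is then: if every point is lightly covered, one can \emph{build} a folding, and this is where the cofilling inequality $|F_0|\le 3|\delta F|$ (Meshulam--Wallach/Gromov) and its $0$-dimensional analogue $|U_0|(n-|U_0|)=|\delta U|$ enter --- they let you replace the raw intersection data by small fillings, edge by edge of $X^*$, and the constant $2/9=\tfrac13(1-\tfrac13)$ falls out of those two bounds. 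Your cofilling inequality is the right tool, but it is deployed globally over the dual triangulation, not along a sweep line.
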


Consider the following generalization in which
there is a probability distribution $p$ on the vertex set $V$,
instead of the uniform distribution on $V$ in Theorem~\ref{thm:gromov}.
Linearly extend $p$ to a probability distribution on $E$ and $T$ by
$$\w(\{v_1,v_2\}) = \frac{\w(v_1)+\w(v_2)}{{n-1 \choose 1}}
\ \ \text{and} \ \
\w(\{v_1,v_2,v_3\}) = \frac{\w(v_1)+\w(v_2)+\w(v_3)}{{n-1 \choose 2}}.$$

To make the text self-contained, we focus on the case that $X$ is well-behaved in the sense defined and explained in 
Section~\ref{sec:dualityAndInter} below.
This assumption is not stringent
(for methods that allow to remove this assumption,
see~\cite{zbMATH05800304,Dotterrer:2015aa,zbMATH03287444}
and references within).

\begin{theorem}
\label{thm:main}
For every continuous well-behaved $f :X \to \R^2$
and for every distribution $p$ on $V$,
there is a point $r \in \R^2$ so that 
$\sum_{t \in T: r \in f(t)} \w(t) \geq \frac{1}{13}-\frac{3}{13(n-1)}$.
\end{theorem}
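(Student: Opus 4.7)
The strategy is to parallel the argument that proves Theorem~\ref{thm:gromov}, replacing uniform counting by $p$-weighted counting throughout. The two ingredients of the unweighted proof are (a) combinatorial ``expansion'' inequalities at dimensions zero and one on the $2$-skeleton of the $n$-simplex, and (b) a topological/homological mechanism that converts this expansion into a lower bound on the $p$-mass of triangles $t$ with $r \in f(t)$ for a generic point $r \in \R^2$, selected by a centerpoint-type argument in the plane.

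For part (a), the linear extension of $p$ to $E$ and $T$ is well suited to the unweighted template, because for any bipartition $V = A \sqcup B$ the $p$-mass of rainbow edges equals
$$\frac{|B|\,p(A)+|A|\,p(B)}{n-1},$$
and an analogous identity at the next level expresses the $p$-mass of rainbow triangles with respect to a tripartition of $V$ as a weighted sum with denominator $\binom{n-1}{2}$. I would substitute these exact formulas into the combinatorial expansion lemmas of~\cite{zbMATH05800304,Dotterrer:2015aa}, and verify that they still yield uniform lower bounds by isolating the worst-case choice of $p$ and checking that it meets the required thresholds. Once this is done, the filling/cofilling machinery used in the well-behaved case goes through verbatim, with $p$-mass replacing cardinality.

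For part (b), I would run the covering argument with a generic $r$ and two transverse lines through $r$: build a $\mathbb{Z}/2$-cochain whose coboundary on triangles enumerates those $t$ with $r \in f(t)$, and apply the weighted expansion of part (a) twice. The resulting product of bounds, combined with a weighted planar centerpoint constant of $\tfrac{1}{3}$, should give a lower bound of order $\tfrac{1}{13}$ on the total $p$-mass of triangles covering $r$; the correction $-\tfrac{3}{13(n-1)}$ arises from the combinatorial adjustment between selecting a vertex and selecting the simplex containing it, exactly as the $-\tfrac{3}{n}$ correction does in Theorem~\ref{thm:gromov}.

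\paragraph{Main obstacle.} The principal difficulty is to ensure that the two weighted expansion steps compose to a bound of at least $\tfrac{1}{13}$ uniformly over all probability distributions $p$ on $V$. In the unweighted case the minimization runs only over the combinatorial partitions induced by the two lines, whereas here one must simultaneously minimize over $p$; a worst-case $p$ can be very concentrated (e.g.\ on a single vertex), and such distributions distort naive expansion bounds. Handling these extremal $p$ relies on exploiting that the factors $1/(n-1)$ and $1/\binom{n-1}{2}$ in the definitions of $p(e)$ and $p(t)$ already perform a built-in averaging over the ``missing'' vertices---this is precisely the structure that lets the simple generalization promised in the introduction go through.
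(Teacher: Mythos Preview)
Part (a) is on track: the paper does exactly this, replacing the unweighted expansion lemmas by weighted analogues (Lemmas~\ref{lem:MW+Ggraphsw} and~\ref{lem:MW+Gw}), and the cut identity you wrote down is the one used. Part (b), however, is where the proposal breaks down. The paper never selects a single $r$ via a centerpoint argument, and ``two transverse lines through $r$'' play no role; the $\tfrac13$ behind $\tfrac29=\tfrac13(1-\tfrac13)$ is the coboundary-expansion constant of Lemma~\ref{lem:1/3}, not the planar centerpoint constant. In the topological setting the centerpoint theorem only controls how the point-images $f(v)$ are split by half-planes and says nothing about which $f(t)$ contain $r$, so it cannot be fed into the expansion step the way you describe.

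The actual mechanism is a proof by contradiction over \emph{all} points at once. After disposing of the case $\w(v)\ge c$ by taking $r=f(v)$, one assumes $\w(\f(v^*))<c$ for every vertex $v^*$ of the fine triangulation $\X$ and constructs a \emph{folding map} $\H:\E\to\F_2^V$ (the chain--cochain homotopy of~\cite{Dotterrer:2015aa}): first cofill each $\f(v^*)$ to a small $\H(v^*)\subset E$ via Lemma~\ref{lem:MW+Gw}, then for each $e^*=\{v^*_1,v^*_2\}$ cofill $\f(e^*)+\H(v^*_1)+\H(v^*_2)$ to a small $\H(e^*)\subset V$ via Lemma~\ref{lem:MW+Ggraphsw}. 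The weight bounds force $\f(t^*)=\H(e^*_1)+\H(e^*_2)+\H(e^*_3)$ for every $t^*$, and summing this over all $t^*$ yields $0$, contradicting that $\sum_{t^*}\f(t^*)$ is the all-ones vector in $\F_2^V$. The essential idea your sketch is missing is that the two cofilling steps must be carried out \emph{coherently across the whole dual complex} $\X$; a single generic $r$ with auxiliary lines does not supply this coherence, and that is precisely what the folding-map construction accomplishes.
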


One reason to consider arbitrary distributions is
that von Neumann's minimax theorem~\cite{Neumann1928}
implies the following dual statement.

\begin{corollary}
For every continuous well-behaved $f:X \to \R^2$,
there is a distribution $\mu$ on $\R^2$ 
so that for every $v \in V$, we have
$\sum_{t \in T: v \in t}
\mu ( f(t) ) \geq \frac{1}{13}-\frac{3}{13(n-1)}$.
\end{corollary}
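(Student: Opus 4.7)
The plan is to read Theorem~\ref{thm:main} as a statement about the value of a two-player zero-sum game and then to apply von Neumann's minimax theorem to swap the order of the two quantifiers. First I set up the game. Player~1 picks a vertex $v \in V$, Player~2 picks a point $r \in \R^2$, and the payoff to Player~2 is
$$A(v,r) \ := \ \frac{|\{t \in T : v \in t,\ r \in f(t)\}|}{\binom{n-1}{2}} \ \in \ [0,1].$$
For any distribution $p$ on $V$, a one-line double-counting gives the identity $\sum_v p(v)\, A(v,r) = \sum_{t \in T :\, r \in f(t)} \w(t)$, where $\w$ is the extension of $p$ to triangles defined in the paper. Theorem~\ref{thm:main} is therefore exactly the assertion that $\min_p \max_r \sum_v p(v)\, A(v,r) \geq c$ with $c = \tfrac{1}{13} - \tfrac{3}{13(n-1)}$.

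To invoke the finite form of minimax, I next argue that Player~2 may be restricted to a finite strategy set. Because $f$ is well-behaved, the union of triangle boundaries $\bigcup_{t \in T} \partial f(t)$ is a finite $1$-dimensional complex that partitions $\R^2$ into finitely many open regions $R_1,\dots,R_N$ on each of which the incidence pattern $\{t \in T : r \in f(t)\}$ is constant. Picking a representative $r_j \in R_j$, the map $r \mapsto A(v,r)$ is constant on each $R_j$, so the restricted game on $V \times \{r_1,\dots,r_N\}$ is finite and still has value at least $c$ for Player~2. Von Neumann's theorem then provides a distribution $\mu$ supported on $\{r_1,\dots,r_N\}\subset\R^2$ satisfying, for every $v \in V$,
$$\sum_{j=1}^N \mu(r_j)\, A(v,r_j) \ = \ \frac{1}{\binom{n-1}{2}} \sum_{t \in T :\, v \in t} \mu\bigl(f(t)\bigr) \ \geq \ c,$$
so that $\sum_{t \in T :\, v \in t} \mu(f(t)) \geq c \binom{n-1}{2} \geq c$ whenever $n \geq 3$, which is the stated bound.

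The only real obstacle is the finite reduction: the classical minimax theorem needs both strategy sets to be finite, while $r$ a priori ranges over all of $\R^2$. The well-behaved hypothesis is precisely what makes this step painless, packaging the plane into finitely many combinatorial types of points $r$; without such a hypothesis one would have to appeal to a continuous version of minimax such as Sion's theorem, introducing avoidable measure-theoretic technicalities. Everything else in the argument is bookkeeping and one layer of double-counting.
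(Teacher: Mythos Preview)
Your argument is correct and is exactly the minimax application the paper points to, with the details filled in; the paper gives no proof beyond citing von Neumann. Two small remarks: the finite reduction needs no geometry at all---since $T$ is finite there are at most $2^{|T|}$ possible incidence patterns $\{t:r\in f(t)\}$, and one representative per realized pattern already suffices---and your computation in fact gives the stronger bound $\sum_{t\ni v}\mu(f(t))\ge c\binom{n-1}{2}$, so the corollary as written is weaker by a factor of $\binom{n-1}{2}$ than the natural dual of Theorem~\ref{thm:main}.
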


The proof of Theorem~\ref{thm:main} is based on Gromov's argument~\cite{zbMATH05800304},
as well as two follow up papers of 
Matou\v{s}ek and Wagner~\cite{zbMATH06346215},
and of Dotterrer, Kaufman and Wagner~\cite{Dotterrer:2015aa},
which made Gromov's work more accessible.

The aim is to keep this text as elementary and self-contained
as possible, while explaining the key ideas.
It thus avoids some of the algebraic topology 
that appears in these works,
and focuses on the $2$-dimensional case.
This is also the reason for our assumption that $f$ is well-behaved.

\paragraph{Outline of proof.}
The proof is based on a notion we call a folding map (in \cite{Dotterrer:2015aa}, the analogous notion in the proof
is called ``chain-cochain homotopy'' which is 
defined in algebraic terms using the language of commutative diagrams).
Folding maps are defined in Section~\ref{sec:proof}.
Their definition is based on a dense and generic 
triangulation $\X$ of the plane,
and on the duality/intersection structure between $\X$ and 
the image of $f$.
As it turns out, it is straight forward to verify that folding maps do not exist.
Gromov's argument is thus about showing that
if the theorems above are false then we can actually
construct a folding map. This beautiful construction is described in Section~\ref{sec:consruction}, and is based
on simple combinatorial-topological properties of $X$.
Folding maps can thus be thought of as an algebraic-topological 
obstruction to the existence of a continuous
map $f$ that violates Theorems~\ref{thm:gromov}
and~\ref{thm:main}.


\paragraph{The constants.}
The constant $2/9$ in Theorem~\ref{thm:gromov} comes in Gromov's proof from the equation
$2/9 = (1/3) \cdot (1-(1/3))$ where the $1/3$
comes from the $3$ in Lemma~\ref{lem:1/3} below
and the function $x (1-x)$ comes from Lemma~\ref{lem:x(1-x)}
below. Both lemmas describe simple combinatorial-topological 
properties of $X$ as in~\cite{zbMATH05565381,zbMATH05800304}.

The constant $1/13$ in Theorem~\ref{thm:main} is not as good as the $2/9$ in Theorem~\ref{thm:gromov}, 
and is probably not optimal
(the proof we present can be simply changed to yield the $2/9$ in Theorem~\ref{thm:gromov}).
Roughly speaking, there are two reasons for this quantitative loss.
One is that in the uniform distribution on $V$ all edges have very small weight, whereas for general distributions this may not be the case.
Another is the structure of the uniform distribution
underlying Lemmas~\ref{lem:x(1-x)} and~\ref{lem:1/3},
which is weaker for general distribution (see Lemmas~\ref{lem:MW+Ggraphsw} and~\ref{lem:MW+Gw}).

\section{Finding a point that is covered many times}
\label{sec:proof}

\paragraph{Preliminaries.}
For a finite set $Y$, we sometimes think of $\{0,1\}^Y$ as a vector space
over the field with two elements $\F_2$, and vice versa. 
Addition of subsets of $Y$ is defined accordingly.
If $h$ is a map from a set $W$ to $\F_2^Y$,
then, usually, we linearly extend $h$ to a map from $\F_2^W$ to $\F_2^Y$.

\subsection{Well-behaved Poincar\'e duality}
\label{sec:dualityAndInter}

Denote by $B$ the euclidean unit ball in the plane.
Assume without loss of generality that $f(e)$ is contained in $B/2$
for all $e \in E$.
Recall that $\X =(\V,\E,\T)$ is a triangulation of $B$ if
the vertices $\V$ is a set of points in $B$, 
the edges $\E$ is a set of line segments connecting points in $\V$, 
and the triangles $\T$ are defined by $\E$
so that they (almost) form a partition of $B$.

We say that $f$ is {\em well-behaved} if there is a finite triangulation
$\X$ of $B$ satisfying the properties listed below
(indeed if $f(E)$ consists of smooth enough
curves then it is simple to construct such $\X$).

\begin{center}
\begin{tikzpicture}

\draw[fill] (0.6,-0.2) circle (0.05cm);
\draw[fill] (-0.5,0.7) circle (0.05cm);
\draw[fill] (0.4,0.6) circle (0.05cm);
\draw[fill] (-0.6,-0.2) circle (0.05cm);
\draw[thick, dash pattern=on 2pt off 1pt]  plot [smooth cycle] coordinates {(-0.5,0.7) (0.22,0.13) (0.6,-0.2) (0,-0.85) (-0.6,-0.2) (-0.35,0.1)};
\draw[thick, dash pattern=on 2pt off 1pt]  plot [smooth ] coordinates {(0.4,0.6) (0,0.9) (-0.5,0.7)};
\draw[thick, dash pattern=on 2pt off 1pt]  plot [smooth ] coordinates {(0.4,0.6) (0.5,0.2) (0.6,-0.2)};
\draw[thick, dash pattern=on 2pt off 1pt]  plot [smooth ] coordinates {(0.4,0.6) (0.1,0.2) (-0.6,-0.2)};

\draw (0:2) \foreach \x in {36,72,...,359} {
                -- (\x:2) 
                } -- cycle;
\draw (0:1.4) \foreach \x in {60,120,...,359} {
                -- (\x:1.4) 
                } -- cycle;

 \draw (-0.25,0.36)--(-0.12,0.65);
 \draw (0.4,0.355)--(-0.12,0.65);
                   
\draw (-0.25,0.36)--(0,-0.7);
\draw (0.4,0.355)--(0,-0.7);

\draw (-0.25,0.36)--(-0.47,0.1);
\draw (0.4,0.355)--(0.47,0.1);

 \draw (30:0.7) -- (101.5:0.68);               
\draw (30:0.7) -- (150:0.7);
\draw (150:0.7) -- (270:0.7);
\draw (270:0.7) -- (29:0.7);
\draw (30:0.7) -- (359:1.4);
\draw (30:0.7) -- (60:1.4);
\draw (150:0.7) -- (60:1.4);
\draw (150:0.7) -- (120:1.4);
\draw (150:0.7) -- (180:1.4);
\draw (270:0.7) -- (180:1.4);
\draw (270:0.7) -- (240:1.4);
\draw (270:0.7) -- (300:1.4);
\draw (270:0.7) -- (359:1.4);

\draw (0:1.4) -- (36:2) ;
\draw (60:1.4) -- (36:2) ;
\draw (60:1.4) -- (72:2) ;
\draw (120:1.4) -- (72:2) ;
\draw (120:1.4) -- (108:2) ;
\draw (120:1.4) -- (144:2) ;
\draw (180:1.4) -- (144:2) ;
\draw (180:1.4) -- (180:2) ;
\draw (180:1.4) -- (216:2) ;
\draw (240:1.4) -- (216:2) ;
\draw (240:1.4) -- (252:2) ;
\draw (240:1.4) -- (288:2) ;
\draw (300:1.4) -- (288:2) ;
\draw (300:1.4) -- (324:2) ;
\draw (359:1.4) -- (324:2) ;
\draw (359:1.4) -- (359:2) ;
 \draw[dash pattern=on \pgflinewidth off 1pt] (0,0) circle (2cm);
 \draw[dash pattern=on \pgflinewidth off 1pt] (0,0) circle (1cm);
\end{tikzpicture}

\

\begin{minipage}{16cm}
{\em 
\noindent
The large dotted circle is the boundary of $B$.
The small dotted circle is the boundary of $B/2$.
The dots and dashed curves are $f(V)$ and $f(E)$ with $n = 4$.
There are four topological triangles.
The image of $f$ is contained in $B/2$.
The triangulation $\X$ of $B$ shows that $f$ is well-behaved.
}
\end{minipage} 

\end{center}

\

\

This assumption is used in Lemmas~\ref{lem:interEdges}
and~\ref{lem:interTriangles} below,
and also in the proof of Theorem~\ref{thm:main}.
Intuitively, $\V$ is defined to be a set of dense and generic points. 
Specifically, the edges in $\E$ are short enough
and the triangles in $\T$ are small enough
so that ``each edge $e^*$ or triangle $t^*$ sees at most one complex
region of $\X$.''

The following properties are assumed to hold (there is some duality
between them):

\begin{description}

\item[$\cdot$] $f(v) \not \in e^*$
and $v^* \not \in f(e)$
for all $v,e,v^*,e^*$.

\item[$\cdot$] For every $v \in V$, 
there is a triangle $t^*_v \in \T$ so that $f(v) \in t^*_v$.
For every $u \neq v$ in $V$,
the point $f(u)$ is not in $t^*_v$.

\item[$\cdot$] 
For every $e \in E$ and $t^* \in \T$, 
if $f(e)$ has one end inside $t^*$
and one end outside $t^*$, then $f(e)$
intersects exactly one edge of $t^*$. \

\vspace{0.6cm}

\begin{multicols}{2}
\begin{center}
\begin{tikzpicture}
\draw[fill] (0,0) circle (0.05cm);
\draw[dash pattern=on 2pt off 1pt] plot [smooth, tension=2] coordinates { (0,0) (0.2,0.5) (1,0.1) (1.5,0)};
\draw[dash pattern=on 2pt off 1pt] plot [smooth, tension=2] coordinates { (0,0) (-0.3,-0.5) (-0.7,-0.1) (-1.3,-0.2)};
\draw[dash pattern=on 2pt off 1pt] plot [smooth, tension=2] coordinates { (0,0) (-0.5,0.5) (-0.3,1) (0,1.2)};
\draw[thick] (-0.3,0.3) -- (0.4,0) -- (-0.2,-0.3) -- cycle ;
\end{tikzpicture}
\end{center}
{\em The dot is $f(v)$ for some $v \in V$,
and the triangle is $t^*_v$.
The dashed curves are images of edges containing $v$;
these edges intersect a single edge of $t^*_v$
and end outside $t^*_v$.}
\end{multicols}

\medskip

\item[$\cdot$] 
For every $e^* \in \E$ and $t \in T$, 
if $e^*$ has one end inside $f(t)$
and one end outside $f(t)$, then $e^*$
intersects exactly one edge of $f(t)$. 

\vspace{0.6cm}

\begin{multicols}{2}
\begin{center}
\begin{tikzpicture}
\draw[fill] (-1,1) circle (0.05cm);
\draw[fill] (1,0) circle (0.05cm);
\draw[fill] (-0.2,-0.5) circle (0.05cm);
\draw[thick] (-0.1,-0.1) -- (0.5,0.5);
\draw[dash pattern=on 2pt off 1pt]  plot [smooth cycle] coordinates {(-1,1) (0,0.2) (1,0) (0,-0.3) (-0.2,-0.5)};
\end{tikzpicture}
\end{center}

{\em The dashed triangle is $f(t)$,
and the line segment is $e^*$.}
\end{multicols}

\medskip

\item[$\cdot$] 
For $v \in V$ and $e \in E$,
if $f(e) \cap t^*_v \neq \emptyset$ then $v \in e$.

\vspace{0.6cm}

\begin{multicols}{2}
\begin{center}
\begin{tikzpicture}
\draw[fill] (0,0) circle (0.05cm);
\draw[dash pattern=on 2pt off 1pt] plot [smooth, tension=2] coordinates { (0,0) (0.2,0.5) (1,0.1) (1.5,0)};
\draw[dash pattern=on 2pt off 1pt] plot [smooth, tension=2] coordinates { (0,0) (-0.3,-0.5) (-0.7,-0.1) (-1.3,-0.2)};
\draw[dash pattern=on 2pt off 1pt] plot [smooth, tension=2] coordinates { (0,0) (-0.5,0.5) (-0.3,1) (0,1.2)};
\draw[dotted] plot [smooth, tension=1] coordinates {  (-0.9,0.9) (0.5,0.1) (1.5,0)};
\draw[dotted] plot [smooth, tension=1] coordinates {  (-1.3,-0.3)  
(0,-0.4) (0.5,-0.1) (1.6,0.3)};
\draw[dotted] plot [smooth, tension=1] coordinates {  (-1.3,0)  (-0.8,0.6)
(-0.3,0.4) (0.1,1.3)};
\draw[thick] (-0.3,0.3) -- (0.4,0) -- (-0.2,-0.3) -- cycle ;
\end{tikzpicture}
\end{center}
{\em The dot is $f(v)$,
and the triangle is $t^*_v$.
The dashed curves are images of edges containing $v$.
The dotted curves are images of edges not containing $v$;
these do not touch $t^*_v$.}
\end{multicols}

\item[$\cdot$] 
For every $e^* \in \E$ and $t \in T$, 
if $e^*$ does not have one end inside $f(t)$
and one end outside $f(t)$, then 
$e^*$ intersects at most two edges of $f(t)$. 
\begin{multicols}{2}
\begin{center}
\begin{tikzpicture}
\draw[fill] (-1,1) circle (0.05cm);
\draw[fill] (1,0) circle (0.05cm);
\draw[fill] (-0.2,-0.5) circle (0.05cm);
\draw[thick] (-1,0.4) -- (0.5,0.5);
\draw[dash pattern=on 2pt off 1pt]  plot [smooth cycle] coordinates {(-1,1) (0,0.2) (1,0) (0,-0.3) (-0.2,-0.5)};
\end{tikzpicture}
\end{center}
{\em The dashed topological triangle is $f(t)$,
and the line segment is $e^*$.}
\end{multicols}

\item[$\cdot$]
For every $v \in V$ and $e^* \in \E$ so that
$e^*$ is not contained in $t^*_v$,
there is at most one $u \in V$
so that $e^* \cap f(\{v,u\}) \neq \emptyset$.
\begin{multicols}{2}
\begin{center}
\begin{tikzpicture}
\draw[fill] (0,0) circle (0.05cm);
\draw[dash pattern=on 2pt off 1pt] plot [smooth, tension=2] coordinates { (0,0) (0.2,0.5) (1,0.1) (1.5,0)};
\draw[dash pattern=on 2pt off 1pt] plot [smooth, tension=2] coordinates { (0,0) (-0.3,-0.5) (-0.7,-0.1) (-1.3,-0.2)};
\draw[dash pattern=on 2pt off 1pt] plot [smooth, tension=2] coordinates { (0,0) (-0.5,0.5) (-0.3,1) (0,1.2)};
\draw[thick] (-0.8,0.6) -- (-0.3,0.7) ;
\end{tikzpicture}
\end{center}
{\em The dot is $f(v)$, and the dashed curves are images
of edges containing $v$.
The line segment is $e^*$ that is not contained
in $t^*_v$; it touches the image of at most
one edge containing $v$.}
\end{multicols}

\item[$\cdot$] For every $e \in E$ and $t^* \in \T$,
the path $f(e)$ intersects at most two edges of $t^*$.
\begin{multicols}{2}
\begin{center}
\begin{tikzpicture}
\draw[fill] (-2,0) circle (0.05cm);
\draw[fill] (1.3,0.2) circle (0.05cm);
\draw[thick] (-1,1) -- (1,0) -- (-0.2,-0.5) -- cycle;
\draw[dash pattern=on 2pt off 1pt]  plot [smooth] coordinates {(-2,0) (-1.5,0.8) (-1,0.4) (0,-0.1) (0.5, 0.5) (1,0.1) (1.3,0.2)};
\end{tikzpicture}
\end{center}
{\em The dashed curve is $f(e)$,
and the triangle is $t^*$.}
\end{multicols}


\end{description}

\subsection{The intersection map}

Define $\f$ as the following intersection map
(this map is denoted\footnote{The symbol $\pitchfork$ seems to be a drawing of an intersection.} by $f^\pitchfork$ in \cite{zbMATH05800304,zbMATH06346215,Dotterrer:2015aa}):
\begin{description}
\item[$\cdot$] $\f(v^*)$ is the set of $t \in T$ so that $v^* \in f(t)$.
\item[$\cdot$] $\f(e^*)$ is the set of $e \in E$ so that $e^* \cap f(e) \neq \emptyset$. 
\item[$\cdot$] $\f(t^*)$ is the set of $v \in V$ so that $f(v) \in t^*$.
\end{description}

\subsection{Folding maps}

A map $\H : \E \to \F_2^V$ is called a {\em folding} if
the following two properties hold:
\begin{enumerate}
\item $\H(e^*_1)+\H(e^*_2)+\H(e^*_3) = \f(t^*)$ 
for every triangle $t^* \in T^*$
where $e^*_1,e^*_2,e^*_3$ are the three edges of $t^*$.
\item $\H(e^*) = 0$ for every edge $e^* \in \E$ so that
$e^* \cap (B/2) = \emptyset$.
\end{enumerate}
Gromov~\cite{zbMATH05800304} described a procedure
for constructing a folding. This procedure
yields the following (Theorem~\ref{thm:folding} is proved in Section~\ref{sec:consruction}).

\begin{theorem}
\label{thm:folding}
Assume that $\w(\f(v^*)) \leq c$ for all $v^* \in \V$
and that $\w(v) \leq c$ for all $v \in V$,
where $c = \frac{1}{13}-\frac{3}{13(n-1)}$.
Then, there is a folding map $\H: \E \to \F_2^V$.
\end{theorem}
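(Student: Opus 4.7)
The construction of $\H$ is driven by two weighted ``bisection-type'' statements, Lemmas~\ref{lem:MW+Ggraphsw} and~\ref{lem:MW+Gw}, which encode the combinatorial-topological properties of $X$ under the distribution $\w$. Both statements are of the form ``given a subset of $T$ (resp.\ $E$) of weight at most some threshold, there is a filling in $V$ (resp.\ $E$) of controlled weight.'' My plan is to feed these lemmas into a two-level construction, first at the vertices of $\X$ and then at its edges.

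In the first stage I would assign to every vertex $v^*\in\V$ a subset $a(v^*)\subseteq V$. For $v^*$ lying outside $B/2$, the well-behavedness assumptions force $\f(v^*)=\emptyset$, and I set $a(v^*)=\emptyset$. For $v^*\in B/2$, the hypothesis $\w(\f(v^*))\le c$ puts $\f(v^*)\subseteq T$ into the regime where the weighted $x(1-x)$-type statement (Lemma~\ref{lem:MW+Gw}) applies and produces a subset $a(v^*)\subseteq V$ of small $\w$-weight that serves as a vertex-level filling of the $2$-chain $\f(v^*)$.

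In the second stage I would define $\H(e^*)$ for every $e^*=\{u^*,v^*\}\in\E$. The naive choice $a(u^*)+a(v^*)$ telescopes to $0$ around any triangle $t^*$ and does not produce $\f(t^*)$, so it has to be corrected by a term built from $\f(e^*)\subseteq E$. Concretely, $\f(e^*)$ has a vertex boundary in $X$ which, via the chain-map identity $\delta^X\f_1=\f_0\,\partial^{\X}$, is compatible with $a(u^*)+a(v^*)$; the weighted $1/3$ statement (Lemma~\ref{lem:MW+Ggraphsw}), whose hypothesis is exactly $\w(v)\le c$ for every $v\in V$, then provides $\H(e^*)\subseteq V$ completing the correction. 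For $e^*$ disjoint from $B/2$ all the input data is empty, so $\H(e^*)=0$ and the second folding property is automatic. Verifying the first folding property is then a bookkeeping computation: summing $\H(e^*_1)+\H(e^*_2)+\H(e^*_3)$ around $t^*$, the $a$-contributions cancel in pairs since each $v^*_i$ appears in two edges of $t^*$, and the edge corrections collapse, through the degree-two chain-map identity $\delta^X\f_2(t^*)=\f_1(\partial^{\X}t^*)$, to exactly $\f(t^*)$.

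The main obstacle is quantitative rather than structural. Each of the two applications consumes a portion of the available weight budget, and one has to verify that the hypotheses of Lemmas~\ref{lem:MW+Ggraphsw} and~\ref{lem:MW+Gw} keep holding after the first stage feeds its output into the second. This is exactly where the constant degrades from $2/9$ in Theorem~\ref{thm:gromov} to $1/13$ in Theorem~\ref{thm:main}: the weighted bisection lemmas are weaker than their uniform counterparts, and the additive vertex-weight loss $\w(v)\le c$ can no longer be absorbed into $O(1/n)$ lower-order terms as in the uniform case. Choosing $c=\frac{1}{13}-\frac{3}{13(n-1)}$ is what makes the chain of inequalities from both lemmas close.
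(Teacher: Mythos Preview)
Your high-level plan---a two-stage construction feeding the weighted filling lemmas into one another---matches the paper's approach, but the proposal has two genuine gaps.

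First, the dimensions in stage one are off. The intersection $\f(v^*)$ lives in $\F_2^T$, so a filling of it must live in $\F_2^E$, not $\F_2^V$: one needs $\H(v^*)\subseteq E$ with $\delta \H(v^*)=\f(v^*)$. Neither Lemma~\ref{lem:MW+Ggraphsw} (which operates on $V$) nor Lemma~\ref{lem:MW+Gw} (which takes $F\subseteq E$ and returns $F_0\subseteq E$ with the same coboundary) can take a subset of $T$ and return a subset of $V$. In the paper one first shows that $\f(v^*)$ is a coboundary at all---via a path in $\X$ from $v^*$ to a vertex outside $B/2$ together with Lemma~\ref{lem:interEdges}---and only then applies Lemma~\ref{lem:MW+Gw} to obtain a small edge-set $\H(v^*)$. (You have also swapped the two lemma labels: Lemma~\ref{lem:MW+Gw} is the weighted $1/3$-type edge statement, and Lemma~\ref{lem:MW+Ggraphsw} is the weighted $x(1-x)$-type vertex statement.)

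Second, and more seriously, the first folding property is \emph{not} a bookkeeping computation. After stage two, each $\H(e^*)$ is determined only by the condition $\delta \H(e^*)=\f(e^*)+\H(v_1^*)+\H(v_2^*)$, i.e.\ only up to an element of $\ker\delta|_{\F_2^V}=\{\emptyset,V\}$ (Claim~\ref{clm:sysVertices}). Summing around $t^*$ and cancelling the $\H(v_i^*)$ contributions therefore gives only $\delta b=0$ for $b=\f(t^*)+\H(e_1^*)+\H(e_2^*)+\H(e_3^*)$, hence merely $b\in\{\emptyset,V\}$; the chain-map identity alone cannot force $b=\emptyset$. The decisive step is the weight estimate $\w(b)<1$, which rules out $b=V$. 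This is precisely where $c=\tfrac{1}{13}-\tfrac{3}{13(n-1)}$ enters: bounding $\w(\f(e^*))$ via the well-behavedness assumptions (this is where the hypothesis $\w(v)\le c$ is actually used---it is not a hypothesis of Lemma~\ref{lem:MW+Ggraphsw}) yields $\w(\H(e^*))<4c+\tfrac{1}{n-1}$, and then $\w(b)<c+3\bigl(4c+\tfrac{1}{n-1}\bigr)=13c+\tfrac{3}{n-1}=1$. Your last paragraph gestures at a budget but misplaces where it is spent.
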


\begin{proof}[Proof of Theorem~\ref{thm:main} given
Theorem~\ref{thm:folding}]
First, if there exists $v \in V$ so that $\w(v) \geq c$, 
then every triangle that contains $v$ has weight at least $\w(v)/{n-1 \choose 2}$
which means that $\sum_{t \in T : v \in T} \w(t)
\geq c$ and we may choose $r = f(v)$.
Second, assume towards a contradiction that Theorem~\ref{thm:main} is false.
On one hand, the sum\footnote{This sum replaces the notion ``fundamental homology class'' in~\cite{zbMATH05800304,zbMATH06346215,Dotterrer:2015aa}.}
 $\sum_{t^* \in T^*} \f(t^*) \in \F_2^V$
is the all 1 function.
On the other hand, by the lemma, 
each term $\f(t^*)$ may be replaced by a sum over edges. 
Summing over $e^*$ rather than on $t^*$,
every edge inside $B/2$ is counted twice, so contributes zero,
and every edge outside $B/2$ contributes zero as well.
Overall $\sum_{t^* \in T^*} \f(t^*) =0$, a contradiction.
\end{proof}

\section{Constructing a folding map}
\label{sec:consruction}

\subsection{Coboundary map}
\label{sec:coMap}

Define the {\em coboundary} map\footnote{The notation does not indicate if $\delta$ acts
on sets of vertices or edges, but this is clear from the context.}
$\delta$ as follows.
For $u \in  V$, 
define $\delta u$ as the set of edges $e \in E$ 
so that $u \in e$.
For $e \in E$, define
$\delta e$ as the set of triangles $t \in T$ so that
$e \in t$.
Recall that we linearly extend $\delta$ to 
$\delta: \F_2^V \to \F_2^E$ and $\delta: \F_2^E \to \F_2^T$.
It follows that if $U \subseteq V$
then $\delta U$ is the set of edges $e \in E$ 
so that $|e \cap U|$ is odd,
and if $F \subseteq E$, then
$\delta F$ is the set of triangles $t \in T$ so that
$|t \cap F|$ is odd.

\paragraph{Kernel.}
The following claims describe some simple, well-known and useful properties of the kernel of the coboundary map
(in~\cite{Kaufman:2014aa,Dotterrer:2015aa} these claims are replaced by more general ``cosystolic'' inequalities).

\begin{claim}
\label{clm:sysVertices}
Let $U \subseteq V$ be non empty.
If $\delta U = 0$ then $U = V$.
\end{claim}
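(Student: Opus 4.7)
The plan is to unpack what $\delta U = 0$ means combinatorially and then exploit the fact that $X$ is the \emph{complete} $2$-skeleton, so every pair of vertices forms an edge.

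First I would note that, by the definition of $\delta$ extended linearly, $\delta U$ is the set of edges $e \in E$ with $|e \cap U|$ odd. Hence $\delta U = 0$ is equivalent to saying that for every edge $e = \{u,v\} \in E$, the intersection $e \cap U$ has even size, i.e.\ either both endpoints lie in $U$ or neither does.

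Next I would use non-emptiness of $U$ together with the completeness of $E$. Pick any $u_0 \in U$, and let $v \in V$ be arbitrary. If $v \neq u_0$, then $\{u_0,v\} \in E$ (since $E$ consists of \emph{all} $2$-subsets of $V$), so by the previous paragraph both $u_0,v \in U$ or neither is; since $u_0 \in U$, we conclude $v \in U$. Of course $u_0 \in U$ too. Therefore $V \subseteq U$, and combined with $U \subseteq V$ we get $U = V$.

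There is really no obstacle here: the statement is a one-line consequence of the combinatorial definition of the coboundary and the fact that $X$ is the $2$-skeleton of the full simplex. The only thing worth flagging is that the argument crucially uses that every pair of vertices is an edge; for a general simplicial complex the analogous statement would fail and would be replaced by the ``cosystolic'' inequalities alluded to in the remark preceding the claim.
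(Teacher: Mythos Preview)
Your proof is correct and essentially the same as the paper's: the paper argues the contrapositive (if some $v\notin U$, then any edge $\{v,u\}$ with $u\in U$ lies in $\delta U$), while you argue directly, but both reduce to the same one-line use of completeness of $E$ together with the parity description of $\delta U$.
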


\begin{proof}
If $v \not \in U$ then
all edge of the form $\{v,u\}$
for $u \in U$ are in $\delta U$.
\end{proof}

\begin{claim}
\label{clm:sysEdges}
Let $F \subseteq E$.
If $\delta F = 0$ then $F = \delta U$
for some $U \subseteq V$.
\end{claim}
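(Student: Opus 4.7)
The plan is to exhibit $U$ explicitly via a pivot-vertex construction, following the standard argument that the full $2$-skeleton has trivial first $\F_2$-cohomology. Fix an arbitrary $v_0 \in V$ and set
$$U = \{ v \in V \setminus \{v_0\} : \{v_0,v\} \in F \}.$$
I would then verify that $F = \delta U$ by comparing the two sides on every edge of $E$, splitting into two cases according to whether the edge contains the pivot.

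For an edge $e = \{v_0, v\}$ containing $v_0$, membership in $\delta U$ amounts to exactly one endpoint of $e$ lying in $U$. Since $v_0 \notin U$, this reduces to $v \in U$, which by construction is the same as $e \in F$. So $F$ and $\delta U$ already agree on every edge through $v_0$.

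For an edge $e = \{a, b\}$ with $a, b \neq v_0$, I would apply the hypothesis $\delta F = 0$ to the triangle $t = \{v_0, a, b\}$: the parity of $|t \cap F|$ is even, so modulo $2$ the membership of $\{a,b\}$ in $F$ equals the sum of the memberships of $\{v_0, a\}$ and $\{v_0, b\}$ in $F$. By the definition of $U$, this latter sum is $1$ iff exactly one of $a, b$ lies in $U$, which is precisely the condition $\{a,b\} \in \delta U$.

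No serious obstacle is expected. The argument hinges on the fact that every pair of distinct vertices of $V$ forms a triangle together with the pivot $v_0$, which is exactly the feature of the full $2$-skeleton being exploited here; for more general complexes the analogous statement can fail, motivating the cosystolic generalizations mentioned after the claim.
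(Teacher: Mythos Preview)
Your proof is correct and is the standard ``cone to a pivot'' argument showing that the complete simplex has vanishing first $\F_2$-cohomology: fix $v_0$, read $U$ off from the edges through $v_0$, and use a single triangle $\{v_0,a,b\}$ to propagate agreement to every remaining edge. Nothing is missing.

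The paper takes a genuinely different, graph-theoretic route. It argues directly about the graph $(V,F)$: first that there are no isolated vertices (an isolated vertex together with any edge of $F$ would give a triangle in $\delta F$), then that $F$ contains no odd cycle (a shortest odd cycle would, after ruling out length $3$, produce a triangle meeting $F$ in exactly one edge), and finally that the resulting bipartition $U_1\cup U_2$ must be complete bipartite (a missing cross edge again yields a bad triangle), so $F=\delta U_1$. Your pivot argument is shorter and cleaner, and it generalizes verbatim to higher skeleta of the simplex; the paper's argument, while longer, extracts the structural fact that $F$ is the edge set of a complete bipartite graph without choosing a distinguished vertex, and stays entirely within elementary graph reasoning in line with the expository goal of the text.
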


\begin{proof}
Assume $F \neq 0$.
Consider the graph defined by $F$.

First, if there is an isolated vertex in the graph, then together
with one of the edges in $F$ we get a triangle in $\delta F$.
So there are no isolated vertices.

Second, we prove that the graph defined by $F$ is bipartite.
Indeed, assume towards a contradiction that it contains a cycle of odd length.
Let
$$\{v_0,v_1\},\{v_1,v_2\},\ldots,\{v_k,v_0\}$$
be an odd cycle of minimum length.
Minimality implies that there are no inner edges in the cycle.
Since $\delta F = 0$,
we know that $k \geq 4$.
So the triangle $\{v_0,v_2,v_3\}$ contains one edge from
$F$ and is in $\delta F$, a contradiction.

Third,
let $U_1,U_2 \subset V$ be the two color classes of 
the graph defined by $F$.
If $u_1 \in U_1$ is not connected to some $u_2 \in U_2$,
then $u_2$ together with one of the edges containing $u_1$
form a triangle in $\delta F$. So $F = \delta U_1$.
\end{proof}

\paragraph{Expansion.}
The following two lemmas
(and generalizations of them)
were proved by
Meshulam and Wallach~\cite{zbMATH05565381}
and Gromov~\cite{zbMATH05800304}.
The lemmas describe expansion properties
of the coboundary map
(the first lemma is extremely simple, while the second
is not).

\begin{lemma}[\cite{zbMATH05800304}]
\label{lem:x(1-x)}
Let $U \subseteq V$. 
There is $U_0 \subseteq V$ so that
$\delta U_0 = \delta U$, and
$|U_0| (n-|U_0|) = |\delta U|$
and $|U_0| \leq n/2$.
\end{lemma}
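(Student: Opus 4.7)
The plan is to observe that $\delta U$ has a very concrete combinatorial meaning: since the complete graph structure on $V$ puts every pair into $E$, the coboundary $\delta U$ consists of exactly those edges with one endpoint in $U$ and one endpoint in $V \setminus U$ (i.e., those edges $e$ with $|e \cap U|$ odd). This is the standard edge cut, so its cardinality is automatically $|U| \cdot (n - |U|)$. The target identity $|U_0|(n-|U_0|) = |\delta U|$ is therefore not an inequality that requires optimization but an identity that holds for any $U_0$ whose coboundary equals $\delta U$.

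Next I would handle the size constraint $|U_0| \leq n/2$ by passing to the complement if necessary. The key fact is that $\delta V = 0$: every edge has exactly two endpoints, so $|e \cap V| = 2$ is even. By linearity of $\delta$ over $\F_2$, this gives $\delta(V \setminus U) = \delta V + \delta U = \delta U$. So the two candidates $U$ and $V \setminus U$ both have coboundary equal to $\delta U$, and the smaller of the two has size at most $n/2$.

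Putting these together, the proof is essentially a two-line argument: set
\[
U_0 = \begin{cases} U & \text{if } |U| \leq n/2, \\ V \setminus U & \text{otherwise}, \end{cases}
\]
verify $\delta U_0 = \delta U$ using $\delta V = 0$, then note that $|\delta U_0|$ counts exactly the edges between $U_0$ and its complement, giving $|U_0|(n - |U_0|) = |\delta U|$. There is no real obstacle here; the only thing to be careful about is making the identification of $\delta U$ with the edge cut explicit, which is immediate from the definition of $\delta$ on $\F_2^V$ extended linearly and the fact that $X$ is the full $2$-skeleton of the $n$-simplex so every pair of vertices forms an edge.
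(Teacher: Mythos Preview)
Your proposal is correct. The paper does not give a separate proof of this lemma (it merely remarks that it ``is extremely simple'' and that the weighted analogue, Lemma~\ref{lem:MW+Ggraphsw}, has a similar proof), but the proof it does give for Lemma~\ref{lem:MW+Ggraphsw} follows exactly your approach: identify $\delta U$ with the edge cut between $U$ and $V\setminus U$, and then take $U_0 \in \{U, V\setminus U\}$ to be the smaller side.
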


\begin{lemma}[\cite{zbMATH05565381,zbMATH05800304}]
\label{lem:1/3}
Let $F \subseteq E$. 
There is $F_0 \subseteq E$ so that
$\delta F_0 = \delta F$, and
$|F_0| \leq 3|\delta F|$.
\end{lemma}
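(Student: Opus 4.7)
The statement is equivalent to finding $U \subseteq V$ with $|F + \delta U| \leq 3|\delta F|$: a direct check gives $\delta \delta U = 0$ (a triangle $t$ meets $\delta U$ in either $0$ or $2$ edges, depending on $|t \cap U|$), so $F_0 := F + \delta U$ automatically satisfies $\delta F_0 = \delta F$. I would therefore pass to a representative $F_0$ of minimum size in the coset $F + \delta(\F_2^V)$, which enjoys $|F_0 + \delta U'| \geq |F_0|$ for every $U' \subseteq V$, and prove $|F_0| \leq 3|\delta F_0|$.

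The first consequence of minimality is the vertex-wise bound: with $U' = \{v\}$ and $d_v := |\{u : \{v,u\} \in F_0\}|$, one checks $|F_0 + \delta\{v\}| = |F_0| + (n-1) - 2 d_v$, so $d_v \leq (n-1)/2$. To go beyond the crude conclusion $|F_0| \leq n(n-1)/4$, I would invoke the link at each vertex. Setting $N_v = \{u : \{v,u\} \in F_0\}$ and $F_v := F_0|_{V \setminus \{v\}}$, the triangle identity $[\{u,v,w\} \in \delta F_0] \equiv [u \in N_v] + [w \in N_v] + [\{u,w\} \in F_v] \pmod 2$ identifies the triangles of $\delta F_0$ containing $v$, viewed as edges of the link, with the set $F_v + \delta_{\mathrm{lk}} N_v$, where $\delta_{\mathrm{lk}} N_v$ is the coboundary of $N_v$ in the complete graph on $V \setminus \{v\}$ and has size $|N_v|(n-1-|N_v|)$.

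The plan is to sum over $v$: each triangle of $\delta F_0$ is counted three times, giving $3|\delta F_0| = \sum_v |F_v + \delta_{\mathrm{lk}} N_v|$, and I would try to extract $|F_0| \leq 3|\delta F_0|$ from the local sizes $d_v$ together with the identity $2|F_0| = \sum_v d_v$ and the quadratic lower bound $|N_v|(n-1-|N_v|) \geq d_v(n-1)/2$ (which holds by the minimality conclusion $d_v \leq (n-1)/2$). The main obstacle is that $|F_v + \delta_{\mathrm{lk}} N_v|$ can be much smaller than $|\delta_{\mathrm{lk}} N_v|$ due to cancellation with $F_v$, and these cancellations couple the vertex-wise estimates so that they do not simply add.

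My resolution, following Meshulam and Wallach, is induction on $n$: given the minimizer $F_0$, pick a vertex $v$ and apply the inductive hypothesis to $F_v$ (whose $(n-1)$-complex coboundary is the restriction of $\delta F_0$ to $V \setminus \{v\}$) to obtain a small cohomologous representative via some $U' \subseteq V \setminus \{v\}$; then exploit the minimality of $F_0$ applied to both $\delta U'$ and $\delta(U' \cup \{v\})$ to control the $d_v$ edges incident to $v$, and average over the choice of $v$ to close the induction. Pinning down the constant $3$ rather than a worse absolute constant is the delicate accounting step that carries the argument.
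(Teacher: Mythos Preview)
Your setup and your key identity are correct and coincide with the paper's approach (the paper states this lemma without proof but proves the weighted analog, Lemma~\ref{lem:MW+Gw}, by exactly this method). What you are missing is the one-line finish. Your link expression $F_v + \delta_{\mathrm{lk}} N_v$ is \emph{equal}, as an edge set in the full complex on $V$, to $F_0 + \delta N_v$ with $\delta$ the full coboundary: an edge $\{v,u\}$ lies in $\delta N_v$ iff $u \in N_v$ iff $\{v,u\} \in F_0$, so the edges through $v$ cancel, and on edges avoiding $v$ the two coboundaries agree. Hence each summand in your identity
\[
3|\delta F_0| \;=\; \sum_{v} |F_v + \delta_{\mathrm{lk}} N_v| \;=\; \sum_{v} |F_0 + \delta N_v|
\]
is already the size of a member of the coset $F_0 + \delta(\F_2^V)$. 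By minimality of $F_0$ every summand is at least $|F_0|$, so $n|F_0| \leq 3|\delta F_0|$, which is far stronger than the stated bound. Equivalently, skip the minimizer: pick the $v$ with smallest summand and set $F_0 = F + \delta N_v$. This averaging is precisely the paper's argument; no induction is needed and the constant drops out immediately.

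Your proposed inductive scheme is a legitimate alternate route in principle, but as written it is vague (you explicitly defer the accounting that produces the constant $3$) and unnecessarily elaborate. The ``obstacle'' you flag---cancellation between $F_v$ and $\delta_{\mathrm{lk}} N_v$---is not an obstacle at all: that cancellation is exactly what makes the summand a small coset representative rather than something you need to lower-bound.
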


We do not use the two lemmas above.
Instead, we use the following two lemmas 
(that replace the uniform distribution with a general distribution).
The proofs of the two lemmas below
are similar to the proofs of the two lemmas above.

\begin{lemma}
\label{lem:MW+Ggraphsw}
Let $U \subseteq V$. 
There is $U_0 \subseteq V$ so that
$\delta U_0 = \delta U$, and
$\w(U_0) < \w(\delta U)$.
\end{lemma}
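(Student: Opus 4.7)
The plan is to exploit the fact that $\delta U = \delta(V \setminus U)$ --- both sides of a vertex bipartition define the same cut of edges --- and simply take $U_0 \in \{U, V \setminus U\}$ to be whichever side has smaller $\w$-weight. Since $\w(U) + \w(V \setminus U) = 1$, this guarantees $\w(U_0) \le 1/2$, which I expect to be the decisive inequality. The task then reduces to checking $\w(U_0) < \w(\delta U_0)$ under the single assumption $\w(U_0) \le 1/2$.

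To verify this, I would first expand $\w(\delta U_0)$ in closed form. Since $\delta U_0$ is precisely the set of edges $\{u,v\}$ with $u \in U_0$ and $v \notin U_0$, and each such edge has weight $(\w(u)+\w(v))/(n-1)$, grouping the contributions by endpoint yields
$$\w(\delta U_0) = \frac{(n-|U_0|)\,\w(U_0) + |U_0|\,(1-\w(U_0))}{n-1}.$$
The inequality $\w(U_0) < \w(\delta U_0)$ then becomes, after clearing the denominator and rearranging, $(2|U_0|-1)\,\w(U_0) < |U_0|$. For every $|U_0| \ge 1$ the ratio $|U_0|/(2|U_0|-1)$ strictly exceeds $1/2$, so the bound $\w(U_0) \le 1/2$ suffices.

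The only subtle point I anticipate is the degenerate case $U_0 = \emptyset$, i.e.\ $U \in \{\emptyset, V\}$, in which $\delta U = 0$ and $\w(\delta U) = 0$, so the strict inequality fails; this should be read as a vacuous or trivial case to be excluded when the lemma is applied. Aside from that edge case, the argument is a one-line computation and the main conceptual step is the initial symmetrization $U \leftrightarrow V \setminus U$.
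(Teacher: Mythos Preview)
Your proposal is correct and follows essentially the same route as the paper: choose $U_0 \in \{U, V\setminus U\}$ with $\w(U_0) \le \w(V\setminus U_0)$, expand $\w(\delta U)$ into the closed form $\frac{(n-|U_0|)\w(U_0) + |U_0|\,\w(V\setminus U_0)}{n-1}$, and verify the strict inequality directly. Your handling of the degenerate case $U \in \{\emptyset, V\}$ matches the paper's, which simply assumes $U \not\in \{\emptyset, V\}$ at the outset.
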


\begin{proof}
We may assume $U \not \in \{\emptyset,V\}$.
The set $\delta U$ is the set of all edges between $U$ and $V \setminus U$.
Thus,
$$\w(\delta U) = \sum_{u \in U} \sum_{v \not \in U} \frac{\w(u)+\w(v)}{n-1}
= \w(U) \frac{n-|U|}{n-1} + \w(V \setminus U) \frac{|U|}{n-1}.$$
Choose $U_0 \in \{U,V\setminus U\}$ 
so that $p(U_0) \leq p(V \setminus U_0)$.
\end{proof}

\begin{lemma}
\label{lem:MW+Gw}
Let $F \subseteq E$. 
There is $F_0 \subseteq E$ so that
$\delta F_0 = \delta F$, and
$\w(F_0) < \frac{3}{2} \w(\delta F)$.
\end{lemma}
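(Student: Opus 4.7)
The plan is to adapt the cone construction underlying Lemma~\ref{lem:1/3} to the weighted setting. For each vertex $v \in V$, define
\[
F_v = \bigl\{ \{u,w\} \in E : \{v,u,w\} \in \delta F \bigr\} \subseteq E,
\]
the ``link'' of $\delta F$ at $v$. I would take $F_0 = F_{v^*}$ for the vertex $v^*$ minimizing $\w(F_v)$; the two things to check are that $\delta F_v = \delta F$ for every $v$, and that an averaging argument produces a small minimum.

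To see $\delta F_v = \delta F$, fix a triangle $t$. If $v \in t$, then only the edge of $t$ opposite to $v$ can lie in $F_v$, and by construction it does lie in $F_v$ iff $t \in \delta F$, so $\delta F_v$ and $\delta F$ agree at $t$. If $v \notin t$, write $t = \{a,b,c\}$; the parity of $|t \cap F_v|$ equals $1_{\{v,a,b\} \in \delta F} + 1_{\{v,a,c\} \in \delta F} + 1_{\{v,b,c\} \in \delta F}$, which by the cocycle identity $\delta(\delta F) = 0$ applied on the $4$-set $\{v,a,b,c\}$ equals $1_{\{a,b,c\} \in \delta F}$. The identity $\delta^2 = 0$ itself is elementary: for any edge $e$, a given $4$-set contains either $0$ or $2$ triangles that include $e$, so the count is even over $\F_2$.

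To control $\w(F_{v^*})$, swap the order of summation: each $\{a,b,c\} \in \delta F$ contributes its three edges to $F_a, F_b, F_c$ respectively, and a direct check from the definitions gives $\w(\{b,c\}) + \w(\{a,c\}) + \w(\{a,b\}) = (n-2)\,\w(\{a,b,c\})$, hence
\[
\sum_{v \in V} \w(F_v) = \sum_{\{a,b,c\} \in \delta F} \bigl( \w(\{b,c\}) + \w(\{a,c\}) + \w(\{a,b\}) \bigr) = (n-2)\,\w(\delta F).
\]
Pigeonholing, $\w(F_{v^*}) \leq \frac{n-2}{n}\,\w(\delta F) < \w(\delta F) < \frac{3}{2}\,\w(\delta F)$, which is more than enough. (If $\delta F = 0$ then every $F_v$ is empty and $F_0 = \emptyset$ works.) The only nontrivial step is the verification that $\delta F_v = \delta F$ on triangles disjoint from $v$; once that is in place the averaging is a one-line pigeonhole, and the slack between $\frac{n-2}{n}$ and $\frac{3}{2}$ reflects that the lemma is stated with a clean constant rather than a tight one.
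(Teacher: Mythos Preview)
Your argument is correct and rests on the same cone construction as the paper: both take $F_0$ to be the link $G_v=\{e:\{v\}\cup e\in\delta F\}$ at a well-chosen vertex $v$. The paper certifies $\delta G_v=\delta F$ by first proving the explicit identity $G_v=F+\delta N_v$ (with $N_v$ the $F$-neighbourhood of $v$) and then invoking $\delta^2=0$; you instead verify $\delta F_v=\delta F$ directly via the cocycle relation on $4$-sets, which is an equally clean route to the same conclusion. Your averaging step is actually sharper: summing edge weights gives the exact identity $\sum_v \w(F_v)=(n-2)\,\w(\delta F)$ and hence $\min_v \w(F_v)\le\frac{n-2}{n}\,\w(\delta F)$, whereas the paper sums triangle weights, writes $\w(\{v\}\cup e)=\frac{\w(v)}{\binom{n-1}{2}}+\frac{2\w(e)}{n-2}$, and then discards the first term, arriving only at $\frac{3(n-2)}{2n}\,\w(\delta F)$. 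Both bounds sit comfortably below the stated $\tfrac32$, so the lemma follows either way; your version simply loses nothing in the averaging.
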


\begin{proof}
Write
\begin{align*}
3 \w(\delta F)
& = \sum_{v \in V} \sum_{t \in \delta F : v \in t} \w(t) .
\end{align*}

For a fixed $v \in V$, 
denote by $G_v$ the set of edges $e = \{v_1,v_2\}$ so that $v \not \in e$,
and $\{v,v_1,v_2\} \in \delta F$.
Every $t$ so that $v \in t \in \delta F$
corresponds to an edge $e \in G_v$ and vice versa.
Denote by $N_v$ the $F$-neighborhood of $v$, that is,
the set of vertices $u \in V$ so that $\{v,u\} \in F$.

We claim that $G_v = F+\delta N_v$.
Indeed, if $e =\{v_1,v_2\} \in G_v$,
then: If $e \in F$ then either $v_1,v_2 \in N_v$ or $v_1,v_2 \not \in N_v$,
so $e \not \in \delta N_v$.
And if $e \not \in F$ then $|\{v_1,v_2\} \cap N_v|=1$
and $e \in \delta N_v$.
On the other hand, if $e =\{v_1,v_2\} \in F+\delta N_v$, then:
If $e \in F$ then $e \not \in \delta N_v$, so $v \not \in e$
and $|\{v_1,v_2\}\cap N_v|$
is even. Thus, $\{v,v_1,v_2\} \in \delta F$.
And if $e \in \delta N_v$ then
$e \not \in F$, $v \not \in e$,
and only one of $v_1,v_2$ is connected to $v$ in the
graph $F$ defines. 
Again $\{v,v_1,v_2\} \in \delta F$.

Thus,
\begin{align*}
3 \w( \delta F)
 = \sum_{v \in V} \sum_{e \in F + \delta N_v}
 \frac{\w(v)}{{n-1 \choose 2}} + \frac{2 \w(e)}{n-2}
\geq \frac{2}{n-2} \sum_{v \in V} \w(F+\delta N_v) .
\end{align*}
Hence, there is $v$ so that
$$\w(F+\delta N_v) \leq \frac{3 (n-2)\w(\delta F)}{2n}.$$
Set $F_0 = F+ \delta N_v$.
The proof is complete since $\delta (\delta N_v) = 0$.
\end{proof}

\subsection{Intersection map}

The following lemmas 
describe the interaction between the coboundary
map and the intersection map
(Poincar\'e duality).
They follow from the assumptions on $\X$
made in Section~\ref{sec:dualityAndInter}
(a more general treatment is given in~\cite{zbMATH03287444}, 
see also \cite{Dotterrer:2015aa}).

\begin{lemma}
\label{lem:interEdges}
Let $e^* = \{v^*,u^*\} \in \E$.
Then,
$$\f(v^*)+\f(u^*) = \delta \f(e^*).$$
\end{lemma}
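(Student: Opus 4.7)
}
Since $\f(v^*), \f(u^*)$ and $\delta \f(e^*)$ are all elements of $\F_2^T$, the identity is equivalent to showing, for every fixed $t \in T$,
$$\mathbbm{1}[v^* \in f(t)] + \mathbbm{1}[u^* \in f(t)] \equiv \bigl|\{e \in t : e^* \cap f(e) \neq \emptyset\}\bigr| \pmod 2.$$
The left side records the parity of the endpoints of the segment $e^*$ that lie inside the topological disk $f(t)$, while the right side counts the number of boundary edges of $f(t)$ that $e^*$ meets. Note that by the first bulleted well-behavedness property, $v^*, u^* \notin \partial f(t)$, so the indicator functions are genuinely well-defined. The plan is to fix $t$ and split into two cases according to whether $v^*$ and $u^*$ lie on the same side of $\partial f(t)$ or not.

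If exactly one of $v^*, u^*$ belongs to $f(t)$, then the left side equals $1$, and by the well-behavedness property that an $e^* \in \E$ with one end inside and one end outside $f(t)$ intersects exactly one edge of $f(t)$, the right side also equals $1$. If instead $v^*$ and $u^*$ lie on the same side of $\partial f(t)$, the left side equals $0$, and by the well-behavedness property that such an $e^*$ intersects at most two edges of $f(t)$, the right side lies in $\{0,1,2\}$. The remaining task is to rule out the value $1$ in this second case.

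Ruling out the value $1$ is the main obstacle, and is where the planar topology genuinely enters. I would invoke the parity principle that a generic arc whose endpoints lie on the same side of a simple closed curve meets that curve transversally at an even number of points. Combined with the genericity of the triangulation $\X$ (so that each nonempty intersection $e^* \cap f(e)$ consists of a single transversal crossing), this identifies the total number of transversal crossings of $e^*$ with $\partial f(t)$ with the right side of the displayed congruence, forcing it to be even in the same-side case. Combining the two cases gives the pointwise congruence for every $t \in T$, which is exactly the claimed equality $\f(v^*)+\f(u^*) = \delta \f(e^*)$ in $\F_2^T$.
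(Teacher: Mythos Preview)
Your approach is essentially the paper's: fix $t\in T$ and split into cases according to whether $v^*,u^*$ lie on the same or different sides of $\partial f(t)$, invoking the well-behavedness properties in each case. You are in fact more careful than the paper in the same-side case; the paper tersely writes ``if $t\in\delta\f(e^*)$ then $t$ contains exactly one edge from $\f(e^*)$, and so one end of $e^*$ is inside $f(t)$ and one end outside,'' without explaining why a single edge of $f(t)$ cannot be met twice by $e^*$ when both endpoints are on the same side. Your Jordan-curve parity argument fills this in; the assumption that each nonempty $e^*\cap f(e)$ is a single transversal point is not explicitly on the list in Section~\ref{sec:dualityAndInter}, but it is squarely in the spirit of ``well-behaved'' and is tacitly relied upon by the paper's own argument.
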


\begin{proof}
There are two containments to prove.
First, if $t \in \f(v^*)+\f(u^*)$ 
then $e^*$ has one end inside $f(t)$ and one end outside $f(t)$.
So $e^*$ intersects exactly one edge of $f(t)$,
and so $t \in \delta \f(e^*)$.
Second, if $t \in \delta \f(e^*)$ then $t$ contains exactly one edge from $\f(e^*)$,
and so one end of $e^*$ is inside $f(t)$ and one end outside $f(t)$.
\end{proof}

\begin{lemma}
\label{lem:interTriangles}
Let $t^* \in T^*$ and let $e^*_1,e^*_2,e^*_3$
be the three edges of $t^*$.
Then,
$$\f(e^*_1)+\f(e^*_2)+\f(e^*_3) = \delta \f(t^*).$$
\end{lemma}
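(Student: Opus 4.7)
The plan is to verify the claimed equality in $\F_2^E$ one edge at a time: for each $e = \{v_1,v_2\} \in E$ I will show that $e \in \f(e^*_1) + \f(e^*_2) + \f(e^*_3)$ iff $e \in \delta \f(t^*)$. Unwinding the definitions, $e \in \delta \f(t^*)$ iff $|e \cap \f(t^*)|$ is odd, i.e.\ exactly one of $f(v_1), f(v_2)$ lies in $t^*$; while $e$ lies in the left-hand side iff the path $f(e)$ meets an odd number of the three edges of $t^*$. Thus the lemma reduces to the parity statement: the number of edges of $t^*$ crossed by $f(e)$ is odd exactly when one of $f(v_1),f(v_2)$ is inside $t^*$ and the other is outside.

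The easy direction mirrors the proof of Lemma~\ref{lem:interEdges} given just above. If exactly one of $f(v_1), f(v_2)$ lies in $t^*$, then $f(e)$ has one endpoint inside $t^*$ and one outside, and the well-behavedness bullet asserting that in this situation $f(e)$ intersects exactly one edge of $t^*$ yields a crossing count of $1$, which is odd.

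For the converse, suppose $f(v_1)$ and $f(v_2)$ lie on the same side of $\partial t^*$ (both inside or both outside $t^*$). I need the number of edges of $t^*$ crossed by $f(e)$ to be even. The last well-behavedness bullet caps this number at $2$, so the possibilities are $0,1,2$ and it remains to rule out $1$. This is the only topological input in the argument: $\partial t^*$ is a Jordan curve in the plane, and by the genericity built into well-behavedness the crossings of $f(e)$ with $\partial t^*$ are transverse, so a continuous path whose endpoints lie on the same side of this Jordan curve must cross it an even number of times.

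I expect the only delicate step to be this parity/Jordan-curve invocation in the same-side case; everything else is bookkeeping with the definitions of $\f$ and $\delta$, and the overall structure of the proof is strictly parallel to that of Lemma~\ref{lem:interEdges}.
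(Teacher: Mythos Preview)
Your proof is correct and is essentially the paper's argument, just organized via the contrapositive: the paper shows LHS $\subseteq$ RHS by noting that ``intersects exactly one edge of $t^*$'' forces one endpoint inside and one outside, and shows RHS $\subseteq$ LHS by using that $\f(t^*)$ is either empty or a singleton $\{v\}$ (so $\delta\f(t^*)$ consists of the edges through $v$, each of which crosses exactly one edge of $t^*$ by well-behavedness). Your symmetric ``same-side $\Rightarrow$ even'' Jordan-curve step replaces the paper's use of $|\f(t^*)|\le 1$, but both arguments rest on the same well-behavedness bullets and the same implicit transversality.
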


\begin{proof}
There are two containments to prove.
First, if $e \in \f(e^*_1)+\f(e^*_2)+\f(e^*_3)$,
then $e$ belongs to exactly one of the sets $\f(e^*_1),\f(e^*_2),\f(e^*_3)$.
This means that one end of $f(e)$ is inside $t^*$ and one end is outside $t^*$,
so $e \in \delta \f(t^*)$.
Second, if $\f(t^*) = 0$, then we are done.
Otherwise, $\f(t^*) = \{v\}$,
and $\delta \f(t^*)$ is the set of all edges containing $v$.
For every edge $e \in \delta \f(t^*)$, the path $f(e)$ starts inside $t^*$
and ends outside $t^*$, and so $e$ intersects exactly one edge of $t^*$.
\end{proof}

\subsection{Construction of a folding map}

\begin{proof}[Proof of Theorem~\ref{thm:folding}]
We start by defining the map $\H$.
The definition is based on properties
of the coboundary and intersection maps
described in previous sub-sections.
We then show that $\H$ is indeed a folding map.

The definition of $\H$ is done in two stages,
first on $\V$ and then on $\E$.
First, define $\H : \V \to \F_2^{E}$ as follows.
For every $v^* \in \V \setminus (B/2)$, set $\H(v^*) =0$.
For every $v^* \in \V \cap (B/2)$, 
define $\H(v^*)$ as follows.
Let $u^* \in \V \setminus (B/2)$. Thus, $\f(u^*) = 0$.
There is a simple path $\{v^*, v^*_1\},\{v^*_1,v^*_2\},\ldots,
\{v^*_k, u^*\}$
from $v^*$ to $u^*$ using edges in $\E$.
Lemma~\ref{lem:interEdges} implies that
\begin{align*}
\f(v^*) & = \f(v^*) + \f(v^*_1) + \f(v^*_1)
+ \f(v^*_2) + \f(v^*_2) + \ldots
+ \f(v^*_k) + \f(v^*_k) +\f(u^*) 
\\ & = \delta \f(\{v^*,v^*_1\}) + \delta \f(\{v^*_1,v^*_2\})
+ \ldots + \delta \f(\{v^*_{k-1},v^*_k\}) + \delta \f(\{v^*_k,u^*\}) .
\end{align*}
Lemma~\ref{lem:MW+Gw} implies that there is $\H(v^*) \subset E$ so that
\begin{align*}
\f(v^*) = \delta \H(v^*) \ \  \text{and} \ \ 
\w(\H(v^*)) < \frac{3 \w(\f(v^*))}{2} \leq \frac{3c}{2}.
\end{align*}
Second, define $\H : \E \to \F_2^V$ as follows.
Let $e^* = \{v^*_1,v^*_2\} \in \E$.
If $e^* \cap (B/2) = \emptyset$, then set $\H(e^*) = 0$.
Otherwise, consider
\begin{align}
\label{eqn:a}
a = \f(e^*) + \H(v^*_1)+\H(v^*_2) \in \F_2^E.
\end{align}
Lemma~\ref{lem:interEdges} implies that
\begin{align*}
\delta a = \delta \f(e^*) + \f(v^*_1)+\f(v^*_2) = 0 .
\end{align*}
Claim~\ref{clm:sysEdges} and Lemma~\ref{lem:MW+Ggraphsw}
imply that there is $\H(e^*) \subset V$ so that
$$a = \delta \H(e^*) \ \ \text{and} \ \
\w(\H(e^*)) \leq \w(a).$$

So far the construction of the map $\H$.
It remains to prove that it is indeed a folding.
The second property of folding is clearly satisfied.
Before proving that the first property is satisfied,
we briefly discuss an upper bound on $\w(\H(e^*)) \leq \w(a)$
for $e^*,a$ from~\eqref{eqn:a}.
If $e^*$ belongs to one of the triangles in $\{t^*_v : v \in V\}$,
then (since $f$ is well-behaved) the set
$\f(e^*)$ consists only of edges that contain a single vertex $v \in V$,
so 
$$\w(\f(e^*)) \leq \w(v) + \frac{1}{n-1} \leq c + \frac{1}{n-1}.$$
Otherwise (again since $f$ is well-behaved)
every $v \in V$ belongs to at most one edge
in $\f(e^*)$ so 
$$\w(\f(e^*)) \leq \sum_{v \in V} \frac{\w(v)}{n-1} \leq \frac{1}{n-1}.$$
Overall,
\begin{align*}
\w(\H(e^*)) \leq \w(a) <  c + \frac{1}{n-1} + 2 \frac{3c}{2} = 4c  + \frac{1}{n-1}.
\end{align*}

Finally, to prove that $\H$ satisfies
the first property of folding, let $t^* \in T^*$.
Denote by $e^*_1,e^*_2,e^*_3$ the three edges of $t^*$,
and by $v^*_1,v^*_2,v^*_3$ the three vertices of $t^*$.
Consider
$$b = \f(t^*) + \H(e^*_1)+\H(e^*_2)+\H(e^*_3) \in  \F_2^V.$$
Lemma~\ref{lem:interTriangles} implies that
\begin{align*}
\delta b 
& = \delta \f(t^*) + 
\f(e^*_1) + \H(v^*_1)+\H(v^*_2) +  \\
& \qquad + \f(e^*_2) + \H(v^*_1)+\H(v^*_3)
+ \f(e^*_3) + \H(v^*_2)+\H(v^*_3) = 0.
\end{align*}
On the other hand
$$\w(b) < c + 3 \left(4c+ \frac{1}{n-1}\right) =1.$$
Claim~\ref{clm:sysVertices} hence implies that $b = 0$.
\end{proof}

\section*{Acknowledgements}

I thank Shay Moran for helpful suggestions and comments.

\bibliographystyle{plain}
\bibliography{references}

\end{document}